\newtheorem{theorem}{Theorem}
\newtheorem{prop}[theorem]{Proposition}
\newcommand{\qqed}{\qed \\[1ex]}
\newcommand{\co}[1]{{\langle {#1}\rangle}}
\newcommand{\Susp}{\Sigma}
\newcommand{\lra}{\longrightarrow}
\newcommand{\lla}{\longleftarrow}
\newcommand{\llra}[1]{\stackrel{#1}{\longrightarrow}}
\newcommand{\tensor}{\otimes}
\newcommand{\iso}{\;\cong\;}
\newcommand{\modmod}{/\!\!/ }
\mathchardef\hy="2D
\newcommand{\Mdef}[2]{\newcommand{#1}{\relax \ifmmode #2 \else $#2$\fi}}
\Mdef{\cA}{\mathcal{A}}
\Mdef{\cF}{\mathcal{F}}
\Mdef{\cP}{\mathcal{P}}
\Mdef{\cAll}{\mathcal{ALL}}
\Mdef{\RR}{\mathbf{R}}
\Mdef{\C}{\mathbf{C}}
\Mdef{\Z}{\mathbf{Z}}
\Mdef{\F}{\mathbf{F}}
\Mdef{\HF}{\mathbf{HF}}
\Mdef{\eps}{\varepsilon}
\begin{document}

\title{ On the Postnikov towers for 
real and complex 
connective 
K-theory\\
 %%{\small version of (\timestamp) }
 }

\author{Robert R. Bruner}
\address{Department of Mathematics\\
	  Wayne State University\\
	  Detroit, Michigan  48202\\
	  USA
	 }
\email{rrb@math.wayne.edu}

\subjclass[2000]{ Primary: 55N15, 55S45;
Secondary: 55N20, 55P42, 55R40, 55S05i, 55S10. }

\maketitle

\section{Introduction}

The analysis of real connective K-theory is facilitated by the
`$\eta c R$' cofiber sequence 
\[
\Sigma ko \llra{\eta} ko \llra{c} ku \llra{R} \Sigma^2 ko
\]
relating real and complex K-theories \cite{kobg}.    
Here we extend this relationship through
the Postnikov towers, producing several useful $ko$-module maps in the
process.

\begin{theorem}
\label{main}
The $\eta c R$ sequence lifts to cofiber sequences relating the connective covers
of $ko$ and $ku$ as follows:
\[
\xymatrix{
\Sigma ko
\ar^{\eta}[r]
&
ko
\ar^{c}[r]
&
ku
\ar^{R}[r]
&
\Sigma^2 ko
\\
\Sigma ko
\ar^{\eta_1}[r]
\ar@{=}[u]
&
ko\co{1}
\ar^{c_1}[r]
\ar[u]
&
\Sigma^2 ku
\ar^{r}[r]
\ar_{v}[u]
&
\Sigma^2 ko
\ar@{=}[u]
\\
\Sigma ko\co{1}
\ar^{\eta_2}[r]
\ar[u]
&
ko\co{2}
\ar^{c_2}[r]
\ar[u]
&
\Sigma^4 ku
\ar^{r_1}[r]
\ar_{v}[u]
&
\Sigma^2 ko\co{1}
\ar[u]
\\
\Sigma ko\co{2}
\ar^{\eta_4}[r]
\ar[u]
&
ko\co{4}
\ar^{c_4}[r]
\ar[u]
&
\Sigma^4 ku
\ar^{r_2}[r]
\ar@{=}[u]
&
\Sigma^2 ko\co{2}
\ar[u]
\\
\Sigma ko\co{4}
\ar^{\eta_8}[r]
\ar[u]
&
ko\co{8}
\ar^{c_8}[r]
\ar[u]
&
\Sigma^6 ku
\ar^{r_4}[r]
\ar_{v}[u]
&
\Sigma^2 ko\co{4}
\ar[u]
\\
\Sigma ko\co{8}
\ar^{\Sigma^8\eta}[r]
\ar[u]
&
ko\co{8}
\ar^{\Sigma^8 c}[r]
\ar@{=}[u]
&
\Sigma^8 ku
\ar^{\Sigma^8 R}[r]
\ar_{v}[u]
&
\Sigma^2 ko\co{8}
\ar[u]
\\
}
\]
\end{theorem}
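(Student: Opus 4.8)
The plan is to read the top row as the definition $ku \hequiv \Cof(\eta\colon \Sigma ko \lra ko)$ and then to descend through the Whitehead (connective-cover) tower
\[
ko\co{8} \lra ko\co{4} \lra ko\co{2} \lra ko\co{1} \lra ko,
\]
building the diagram one row at a time. Two structural inputs are used throughout. First, the successive cofibers of this tower are the Eilenberg--MacLane spectra $H\Z$, $\Sigma H\F_2$, $\Sigma^2 H\F_2$, $\Sigma^4 H\Z$, carrying $\pi_0,\pi_1,\pi_2,\pi_4$ of $ko$ (the intervening $\pi_3,\pi_5,\pi_6,\pi_7$ all vanish, which is why the tower steps through the levels $0,1,2,4,8$). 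Second, for $ku$ Bott periodicity identifies the connective cover $ku\co{2n}$ with $\Sigma^{2n} ku$, the cover map being multiplication by $v^n$, while the Postnikov quotients of $ku$ are copies of $H\Z$ in even degrees.

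Given row $k$ as an established cofiber sequence $\Sigma B_{k-1} \lra B_k \lra C_k \lra \Sigma^2 B_{k-1}$, with $B_j$ ranging over $ko,ko\co1,ko\co2,ko\co4,ko\co8$ and $C_k\hequiv\Sigma^{2e_k}ku$, I would construct row $k+1$ as follows. Let $p_{k+1}\colon B_{k+1}\lra B_k$ be the next cover map, with cofiber $W_k$. One first lifts the composite $\eta_{(k)}\circ\Sigma p_k\colon \Sigma B_k \to B_k$ along $p_{k+1}$: the obstruction lies in $H^{n_k-1}(ko\co{n_k};\pi_{n_k}ko)$, which vanishes since $ko\co{n_k}$ is $(n_k-1)$-connected, so the lift $\eta_{(k+1)}$ exists. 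Applying the octahedral axiom (equivalently the $3\times 3$ lemma) to $\Sigma p_k$, $p_{k+1}$ and the two horizontal cofiber sequences then simultaneously produces the map $c_{(k+1)}$, the connecting map of the new row, and the vertical comparison $\phi_k\colon C_{k+1}\to C_k$, exhibiting row $k+1$ as a map of cofiber sequences down to row $k$.

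It remains to identify $C_{k+1}$. The octahedron places $\Cof(\phi_k)$ in a cofiber sequence $\Sigma W_{k-1} \lra W_k \lra \Cof(\phi_k)$, so that $C_{k+1}\hequiv\mathrm{fib}(C_k \to \Cof(\phi_k))$ is a connective cover of $C_k\hequiv\Sigma^{2e_k}ku$, hence by the second input a suspension $\Sigma^{2e_{k+1}}ku$, with $\phi_k$ equal either to $v$ or to the identity. The base case $k=1$ has $W_0=0$ and recovers $C_2=\mathrm{fib}(ku\to H\Z)\hequiv\Sigma^2 ku$; the remaining steps proceed identically. Finally the bottom row is obtained from the top one by applying $\Sigma^8$ together with the Bott equivalence $ko\co{8}\hequiv\Sigma^8 ko$, which closes the tower.

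The \emph{main obstacle} is exactly this last identification: whether the cover jumps (producing the vertical map $v$ and a shift) or does not (producing an identity, as between the two copies of $\Sigma^4 ku$ in the third and fourth rows) is governed by the map $\Sigma W_{k-1}\to W_k$ between consecutive Postnikov quotients of $ko$, i.e.\ by the $k$-invariants of $ko$. Establishing that this map is the mod-$2$ reduction, the integral Bockstein, or an isomorphism, as each case requires, is the real content; I would extract it from the structure of $H^*(ko;\F_2)\iso\cA\modmod\cA(1)$ as a module over the Steenrod algebra, equivalently from the relations $\eta\colon\pi_1\to\pi_2$ iso, $\eta^3=0$, and $c(\alpha)=2v^2$ in $\pi_* ku$. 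The residual bookkeeping, namely checking that all squares commute and that the chosen lifts are mutually compatible, is routine once the cofibers have been pinned down.
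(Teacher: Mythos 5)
Your inductive skeleton is essentially the paper's proof: descend the Whitehead tower of $ko$ one level at a time, apply the octahedral axiom to the composite of the cover map with the previous row's $\eta$-map, and identify the new right-hand term by pinning down the induced map $\Sigma W_{k-1}\lra W_k$ between consecutive Postnikov quotients of $ko$. You have also correctly located where the work lies: the paper's proof consists almost entirely of a chain of propositions computing $\eta_1^*$, $c_1^*$, $\eta_2^*$, $r_2^*$, $\eta_4^*$, $r_4^*$, \dots\ as maps of $\cA(1)$-modules, and reading off from them, for example, that $\Sigma ko\co{1}\lra \Sigma^2 H\F_2$ is nontrivial (forcing the comparison $\Sigma^4 ku \lra \Sigma^4 ku$ to be an equivalence) or that $\eta_4^*$ hits $Sq^1$ (forcing the comparison to be $v$). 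Since the alternation between ``$v$'' and ``$=$'' in the third column is exactly what the theorem asserts, these computations are not residual bookkeeping --- they are the proof --- but the method you propose for carrying them out (exactness of the long exact sequences of $\cA(1)$-modules, starting from $H^*(ko;\F_2)\iso\cA\modmod\cA(1)$) is precisely the paper's, so this is a deferral rather than a wrong turn.

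The one genuine gap is the bottom row. Declaring it to be ``$\Sigma^8$ of the top row via Bott periodicity'' does not close the tower: you must still produce the vertical maps down from the fifth row and verify that the squares commute. The paper instead runs the construction one more time, obtaining a cofiber sequence $\Sigma ko\co{8}\llra{\widetilde{\eta}} ko\co{8}\llra{\widetilde{c}}\Sigma^8 ku$, and then must prove that $\widetilde{\eta}$, $\widetilde{c}$ and the connecting map really are the $8$-fold suspensions of $\eta$, $c$, $R$ under the equivalence $ko\co{8}\hequiv\Sigma^8 ko$. This identification is not formal: a priori $\widetilde{\eta}$ is just some map with the right source, target and cofiber. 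The paper settles it by observing that every map in sight is a $ko$-module map by construction and invoking the adjunction $F_{ko}(\Sigma^9 ko,\Sigma^8 ko)\hequiv F(S^9,\Sigma^8 ko)$, so that a $ko$-module map $\Sigma^9 ko\lra\Sigma^8 ko$ is determined by its effect on homotopy, where it agrees with $\Sigma^8\eta$; the other two maps then follow. Some argument of this kind must be added to your write-up before the last row is justified.
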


In the sequence above, $c$ is complexification, $r$ is realification,
and $\eta$ is multiplication by $\eta \in ko_1$.  The map $R$
is an extension of realification $r$ over the Bott map: $r=Rv$.

We will write $X\co{n} \lra X$ for the
$n$-connected cover of $X$.  By this we mean that $\pi_i X\co{n} = 0$ for $i < n$, while
$\pi_i X\co{n} \lra \pi_i X$ is an isomorphism for $i \geq n$.
It will be useful to record the maps induced in cohomology.  All the modules
and maps we will deal with 
are in the image of induction from $\cA(1)\hy{\mathrm{Mod}}$,
\[
\cA \tensor_{\cA(1)} - : \cA(1)\hy{\mathrm{Mod}} \lra \cA\hy{\mathrm{Mod}},
\]
so we will record the results
in $\cA(1)\hy{\mathrm{Mod}}$, leaving it to the reader 
to tensor up.

The first lift, $\eta_1 c_1 r$, was brought to my attention by Vic Snaith
(\cite{snaith}).  The remaining lifts appeared at one point to be useful in
Geoffrey Powell's analysis of $ko^*BV_+$ (\cite{powell}), but in the end were unnecessary
there.

\section{Complex Periodicity}

In the complex case, periodicity and the Postnikov tower
amount to the same thing.
If we write $ku_* = \Z[v]$, with $|v|=2$,
then the Postnikov covers of $ku$
are simply given by multiplication by powers of $v$.

\medskip

\begin{minipage}{0.4\textwidth}
\[
\xymatrix
{
\Sigma^2 ku
\ar^{\simeq}[d]
\ar^{v}[dr]
&
\\
ku\co{2}
\ar[r]
&
ku
\\
}
\]
\end{minipage}
and more generally
\begin{minipage}{0.4\textwidth}
\[
\xymatrix
{
\Sigma^{2i+2} ku
\ar^{\simeq}[d]
\ar^{v}[r]
&
\Sigma^{2i} ku
\ar^{\simeq}[d]
\\
ku\co{2i+2}
\ar[r]
&
ku\co{2i}
\\
}
\]
\end{minipage}\\[2ex]

\begin{prop}
$ku \lra H\Z \lra \Susp^3 ku$ induces the short exact sequence
\[
\xymatrix{
\cA(1)/(Sq^1,Sq^3)
&&
\cA(1)/(Sq^1) 
\ar[ll]
&&
\Sigma^3
\cA(1)/(Sq^1,Sq^3) 
\ar[ll]
\\
}
\]
\begin{center}
\scalebox{0.7}{
$
\xymatrix{
\circ
\ar@/^1pc/[dd]
&&
\circ
\ar@/^1pc/[dd]
\ar[ll]
&&
\\
\\
\circ
&&
\circ
\ar[d]
&&
\\
&&
\circ
\ar@/^1pc/[dd]
&&
\circ
\ar@/^1pc/[dd]
\ar[ll]
\\
\\
&&
\circ
&&
\circ
\\
{\phantom{\cA(1)/(Sq^1,Sq^3)}}
&&
{\phantom{\cA(1)/(Sq^1) }}
&&
{\phantom{\Sigma^3 \cA(1)/(Sq^1,Sq^3) }}
\\
}
$
}% end scalebox
\end{center}
\end{prop}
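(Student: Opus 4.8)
The plan is to read the asserted sequence off the cohomology long exact sequence of the cofiber sequence, working with mod $2$ cohomology and recording everything at the level of $\cA(1)$-modules. The sequence $ku \lra H\Z \lra \Susp^3 ku$ is the rotation of $\Susp^2 ku \llra{v} ku \lra H\Z$, whose cofiber is $H\Z$ because $v$ is a non-zero-divisor on $ku_* = \Z[v]$ with quotient $\Z$ concentrated in degree $0$. First I would record the two cohomology computations: since $\pi_0 ku = \Z$ is torsion free, the Bockstein $Q_0 = Sq^1$ kills the bottom class, and together with $Q_1 = Sq^3 + Sq^2 Sq^1$ this gives $H^*(ku;\F_2) = \cA//E(Q_0,Q_1)$, while $H^*(H\Z;\F_2) = \cA//E(Q_0)$. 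These are cyclic $\cA$-modules induced from the $\cA(1)$-modules $\cA(1)/(Sq^1,Sq^3)$ (two cells in degrees $0,2$ joined by $Sq^2$) and $\cA(1)/(Sq^1)$ (the four-cell zig-zag $Sq^2, Sq^1, Sq^2$ in degrees $0,2,3,5$), as one verifies on Poincar\'e series. As $\cA$ is free over $\cA(1)$, the functor $\cA\tensor_{\cA(1)}(-)$ is exact and faithful, so it is enough to produce the sequence at the level of $\cA(1)$-modules.

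The clean step is that the connecting homomorphism vanishes. In the long exact sequence it is the $\cA$-linear map $v^* : H^*(ku;\F_2) \lra H^{*-2}(ku;\F_2)$ induced by $v : \Susp^2 ku \lra ku$; because $H^*(ku;\F_2)$ is a cyclic $\cA$-module generated in degree $0$, the map $v^*$ is determined by the image of that generator, which lies in $H^{-2}(ku;\F_2) = 0$, so $v^* = 0$. Hence the long exact sequence breaks into the short exact sequences
\[
0 \lra H^*(\Susp^3 ku;\F_2) \llra{h^*} H^*(H\Z;\F_2) \llra{p^*} H^*(ku;\F_2) \lra 0,
\]
and since $p : ku \lra H\Z$ is a ring map that is an isomorphism in degree $0$, the map $p^*$ is the canonical surjection $\cA//E(Q_0) \lraonto \cA//E(Q_0,Q_1)$, with $h^*$ the inclusion of its kernel.

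It then remains to match the kernel with the displayed submodule, and this $\cA(1)$-module bookkeeping is where I expect the real work to be. Concretely I would check that inside the zig-zag $\cA(1)/(Sq^1)$ the top two cells (degrees $3$ and $5$, joined by $Sq^2$, with $Sq^1$ killing the degree-$3$ class since $Sq^1 Sq^3 = 0$) form a submodule $\iso \Susp^3 \cA(1)/(Sq^1,Sq^3)$, that the quotient is the bottom two cells $\cA(1)/(Sq^1,Sq^3)$, and that the nontrivial $Sq^1$ joining degree $2$ to degree $3$ is exactly the extension gluing quotient to sub. The one genuinely delicate input is the identification $H^*(ku;\F_2) = \cA//E(Q_0,Q_1)$, equivalently that the inducing module is the two-cell module rather than the four-cell diamond $\cA(1)//E(Q_1)$: this is what makes the bottom row of the picture two dots, and it is forced by the vanishing of the Bockstein on the torsion-free bottom class.
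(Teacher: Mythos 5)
Your argument is correct and complete: the identifications $H^*(ku;\F_2)=\cA\modmod E(Q_0,Q_1)$ and $H^*(H\Z;\F_2)=\cA\modmod E(Q_0)$, the vanishing of $v^*$ (and its suspension) because the source is cyclic on a degree-$0$ generator, and the resulting identification of $\Ker(p^*)$ with the top two cells of $\cA(1)/(Sq^1)$ are exactly the standard justification. The paper offers no proof of this proposition at all, treating it as well known, so your write-up simply supplies the routine verification the author omitted; nothing in it conflicts with the paper's setup.
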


\section{Real Periodicity}

In the real case, periodicity is broken into 4 steps.  We write
$ko_* =  \Z[\eta,\alpha,\beta]/(2\eta,\eta^3,\eta\alpha,\alpha^2-4\beta)$
with $|\eta|=1$, $|\alpha|=4$, and $|\beta| = 8$.

\[
\xymatrix{
\Sigma^8 ko
\ar^{\simeq}[r]
\ar_{\beta}[rdddd]
&
ko\co{8}
\ar[d]
\\
&
ko\co{4}
\ar[r]
\ar[d]
&
\Sigma^4 H\Z
\\
&
ko\co{2}
\ar[r]
\ar[d]
&
\Sigma^2 H\F_2
\\
&
ko\co{1}
\ar[r]
\ar[d]
&
\Sigma H\F_2
\\
&
ko
\ar[r]
&
H\Z
}
\]

\pagebreak

The following Proposition is well known.  It is a simple way to show that
a spectrum whose cohomology is $\cA\modmod \cA(1)$ must have $2$-local homotopy
additively isomorphic to $\pi_* ko$.

\begin{prop}
The maps induced in cohomology by  the Postnikov tower for $ko$ are as follows.
\begin{enumerate}
\item
\[
\xymatrix{
ko 
\ar[rr]
&&
H\Z 
\ar[rr]
&&
\Sigma ko\co{1}
}
\]
induces the short exact sequence
\[
\xymatrix{
&&
\F_2
&&
\cA(1)/(Sq^1) 
\ar[ll]
&&
\Sigma^2 \cA(1)/(Sq^2)
\ar[ll]
\\
}
\]
\begin{center}
\scalebox{0.7}{
$
\xymatrix{
% 0
\circ
&&&
\circ
\ar@/^1pc/[dd]
\ar[lll]
&&&
\\
% 1
\\
% 2
&&&
\circ
\ar[d]
&&&
\circ
\ar[d]
\ar[lll]
\\
% 3
&&&
\circ
\ar@/^1pc/[dd]
&&&
\circ
\ar@/^1pc/[dd]
\\
% 4
\\
% 5
&&&
\circ
&&&
\circ
\\
{\phantom{\F_2}}
&&
{\phantom{\cA(1)/(Sq^1)}}
&&
{\phantom{\Sigma^2 \cA(1)/(Sq^2)}}
\\
}
$
}% end scalebox
\end{center}
\item
\[
\xymatrix{
ko\co{1} 
\ar[rrr]
&&&
\Sigma H\F_2 
\ar[rrr]
&&&
\Sigma ko\co{2}
}
\]
induces the short exact sequence
\[
\xymatrix{
\Sigma\cA(1)/(Sq^2)
&&
\Sigma \cA(1) 
\ar[ll]
&&
\Sigma(Sq^2) \iso \Sigma^3\cA(1)/(Sq^3)
\ar[ll]
\\
}
\]
\begin{center}
\scalebox{0.7}{
$
\xymatrix{
% 1
\circ
\ar[d]
&&&&
\circ
\ar[llll]
\ar[d]
\ar@/_1pc/[dd]
&&&&&
\\
% 2
\circ
\ar@/^1pc/[dd]
&&&&
\circ
\ar@/^1pc/[ddr]
&&&&&
\\
% 3
&&&&
\circ
\ar@/^1pc/[dd]
\ar[dl]
&&&&&
\circ
\ar[lllll]
\ar[d]
\ar@/^1pc/[dd]
\\
% 4
\circ
&&&
\circ
\ar@/_1pc/[ddr]
&&
\circ
\ar[dl]
&&&&
\circ
\ar@/_1pc/[dd]
\\
% 5
&&&&
\circ
\ar@/^1pc/[dd]
&&&&&
\circ
\ar@/^1pc/[dd]
\\
% 6
&&&&
\circ
\ar[d]
&&&&&
\circ
\ar[d]
\\
% 7
&&&&
\circ
&&&&&
\circ
\\
{\phantom{\Sigma\cA(1)/(Sq^2)}}
&&&&
{\phantom{\Sigma \cA(1) }}
&&&&&
{\phantom{\Sigma(Sq^2) \iso \Sigma^3\cA(1)/(Sq^3)}}
\\
}
$
}% end scalebox
\end{center}
\pagebreak
\item
\[
\xymatrix{
ko\co{2} 
\ar[rrr]
&&&
\Sigma^2 H\F_2 
\ar[rrr]
&&&
\Sigma ko\co{4}
}
\]
induces the short exact sequence
\[
\xymatrix{
\Sigma^2\cA(1)/(Sq^3)
&&
\Sigma^2 \cA(1) 
\ar[ll]
&
\Sigma^2(Sq^3) \iso \Sigma^5\cA(1)/(Sq^1,Sq^2Sq^3)
\ar[l]
\\
}
\]
\begin{center}
\scalebox{0.7}{
$
\xymatrix{
% 2
\circ
\ar[d]
\ar@/_1pc/[dd]
&&&&
\circ
\ar[llll]
\ar[d]
\ar@/_1pc/[dd]
&&&&&
\\
% 3
\circ
\ar@/^1pc/[dd]
&&&&
\circ
\ar@/^1pc/[ddr]
&&&&&
\\
% 4
\circ
\ar@/_1pc/[dd]
&&&&
\circ
\ar@/^1pc/[dd]
\ar[dl]
&&&&&
\\
% 5
\circ
\ar[d]
&&&
\circ
\ar@/_1pc/[ddr]
&&
\circ
\ar[dl]
&&&&
\circ
\ar@/_1pc/[llllll]
\ar@/_1pc/[dd]
\\
% 6
\circ
&&&&
\circ
\ar@/^1pc/[dd]
&&&&&
\\
% 7
&&&&
\circ
\ar[d]
&&&&&
\circ
\ar[d]
\\
% 8
&&&&
\circ
&&&&&
\circ
\\
{\phantom{\Sigma\cA(1)/(Sq^2)}}
&&&&
{\phantom{\Sigma \cA(1) }}
&&&&&
{\phantom{\Sigma(Sq^2) \iso \Sigma^3\cA(1)/(Sq^3)}}
\\
}
$
}% end scalebox
\end{center}
\item
\[
\xymatrix{
ko\co{4} 
\ar[rr]
&&
\Sigma^4 H\Z 
\ar[rr]
&&
\Sigma ko\co{8}
}
\]
induces the short exact sequence
\[
\xymatrix{
&&
\Sigma^4\cA(1)/(Sq^1,Sq^2Sq^3)
&
\cA(1)/(Sq^1) 
\ar[l]
&&
\Sigma^9 \F_2
\ar[ll]
\\
}
\]
\begin{center}
\scalebox{0.7}{
$
\xymatrix{
% 4
\circ
\ar@/^1pc/[dd]
&&&
\circ
\ar@/^1pc/[dd]
\ar[lll]
&&&
\\
% 5
\\
% 6
\circ
\ar[d]
&&&
\circ
\ar[d]
&&&
\\
% 7
\circ
&&&
\circ
\ar@/^1pc/[dd]
&&&
\\
% 8
\\
% 9
&&&
\circ
&&&
\circ
\ar[lll]
\\
{\phantom{\Sigma^4\cA(1)/(Sq^1,Sq^2Sq^3)}}
&&
{\phantom{ \cA(1)/(Sq^1) }}
&&
{\phantom{\Sigma^8 \F_2}}
\\
}
$
}% end scalebox
\end{center}

\end{enumerate}
\end{prop}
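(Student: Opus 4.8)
The plan is to strip off the four Postnikov stages one at a time, starting from the bottom, reading the cohomology of each successive connective cover off as the kernel produced at the previous stage. The fixed inputs are the cohomologies of the Eilenberg--MacLane building blocks as $\cA(1)$-modules, $H^*H\Z \iso \cA(1)/(Sq^1)$ and $H^*H\F_2 \iso \cA(1)$ (each then induced up along $\cA \tensor_{\cA(1)} -$), together with the base case $H^*ko \iso \F_2$, which is exactly the statement $H^*ko = \cA \modmod \cA(1)$. As a check at the far end I will use the Bott equivalence $\beta \colon \Susp^8 ko \hequiv ko\co{8}$ from the preceding diagram, which forces $H^*ko\co{8} \iso \Susp^8 \F_2$.

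The four cofiber sequences are the rotations $ko\co{n} \lra E_n \lra \Susp ko\co{n'}$ of the Postnikov stages, with $(n,n')$ running through $(0,1),(1,2),(2,4),(4,8)$ and $E_n$ the corresponding shifted Eilenberg--MacLane spectrum ($H\Z$, $\Susp H\F_2$, $\Susp^2 H\F_2$, $\Susp^4 H\Z$). For each I would first show that the long exact sequence in mod-$2$ cohomology collapses to the short exact sequence
\[
0 \lra H^*(\Susp ko\co{n'}) \lra H^*(E_n) \lra H^*(ko\co{n}) \lra 0 .
\]
This amounts to showing the restriction $H^*(E_n) \lra H^*(ko\co{n})$ is onto, whereupon the connecting map vanishes. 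Now $ko\co{n} \lra E_n$ is the bottom Postnikov section, so it is an isomorphism on $\pi_n$ and therefore nonzero on the bottom cohomology class; since each $H^*(ko\co{n})$ is cyclic over $\cA(1)$, generated by that bottom class (true for the base $H^*ko \iso \F_2$ and propagated by the induction), the generator of $H^*(E_n)$ is carried onto the generator of $H^*(ko\co{n})$ and surjectivity follows.

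The substance is the identification of each kernel as an explicit $\cA(1)$-module. At the first stage the map is the augmentation $\cA(1)/(Sq^1) \lra \F_2$, whose kernel is the augmentation ideal; one checks it is $\Susp^2 \cA(1)/(Sq^2)$, giving $H^*ko\co{1} \iso \Susp \cA(1)/(Sq^2)$. At the two $H\F_2$ stages the map is simply the quotient of the free module $\cA(1)$ by the annihilator of the cyclic generator, so the kernel is that annihilator ideal: first the left ideal $(Sq^2)$, then $(Sq^3)$. The required module isomorphisms $\Susp(Sq^2) \iso \Susp^3 \cA(1)/(Sq^3)$ and $\Susp^2(Sq^3) \iso \Susp^5 \cA(1)/(Sq^1, Sq^2Sq^3)$ --- and at the last stage the identification of the kernel $(Sq^2Sq^3) \subset \cA(1)/(Sq^1)$ as $\Susp^9 \F_2$ --- are exactly the content of the displayed diagrams: one picks a cyclic generator and computes its annihilator by running the Adem relations inside $\cA(1)$, tracking both the $Sq^1$- and the $Sq^2$-action. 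This Adem bookkeeping, and the verification that the resulting module structures are the ones pictured (in particular that the curved $Sq^2$-extensions fall where drawn), is the main obstacle; the module pictures are precisely the device that makes it manageable.

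Finally I would confirm that the induction closes up consistently: the cover cohomology obtained as a kernel at one stage reappears, desuspended, as the source at the next, and the kernel extracted at the fourth stage is $\Susp^9 \F_2$, so that $H^*ko\co{8} \iso \Susp^8 \F_2$ in agreement with the Bott equivalence. This last coincidence both pins down the individual module identifications and certifies that the four short exact sequences fit together as a single Postnikov tower for $ko$.
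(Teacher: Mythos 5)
The paper states this proposition without proof, labelling it ``well known,'' so there is no argument of the author's to compare against; your proposal supplies the standard proof and it is sound. Peeling off one Postnikov stage at a time, using the Hurewicz isomorphism on $\pi_n$ to see that the bottom class of $H^*(E_n)$ hits the bottom class of the cyclic module $H^*(ko\co{n})$ (so the long exact sequence collapses to the short one), and then reading off $H^*(ko\co{n'})$ as the desuspended kernel is exactly the intended route, and the cross-check against the Bott equivalence $\Susp^8 ko \hequiv ko\co{8}$ is a good sanity check. The only caveat is that the real content --- the Adem-relation identifications $\Susp(Sq^2)\iso\Susp^3\cA(1)/(Sq^3)$, $\Susp^2(Sq^3)\iso\Susp^5\cA(1)/(Sq^1,Sq^2Sq^3)$, and the cyclicity of each kernel that your surjectivity step at the next stage depends on --- is deferred rather than carried out; that bookkeeping is finite and routine in the $8$-dimensional algebra $\cA(1)$, and is precisely what the displayed module diagrams encode, but a complete write-up would exhibit the generators and their annihilators explicitly (and note that inducing up along $\cA\tensor_{\cA(1)}-$ preserves the exact sequences because $\cA$ is free as a right $\cA(1)$-module).
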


\section{Maps of Postnikov towers}

First, we record the maps induced in cohomology by our starting point,
the $\eta c R$ sequence.

\begin{prop}
$ ko \llra{c} ku \llra{R} \Sigma^2 ko$ induces the short exact sequence
\[
\cA(1)/(Sq^1,Sq^2) 
 \lla \cA(1)/(Sq^1,Sq^3) \lla \Sigma^2 \cA(1)/(Sq^1,Sq^2) 
\]
\[
\xymatrix{
\circ
&&
\circ
\ar@/^1pc/[dd]
\ar[ll]
&&
\\
\\
&&
\circ
&&
\circ
\ar[ll]
\\
}
\]
\end{prop}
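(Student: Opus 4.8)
The plan is to read the asserted short exact sequence off the long exact cohomology sequence of the cofiber sequence $ko \llra{c} ku \llra{R} \Susp^2 ko$, after identifying the three cohomologies as $\cA(1)$-modules. These identifications are standard: $H^*ko = \cA\modmod\cA(1)$ is recorded by the cyclic module $\cA(1)/(Sq^1,Sq^2) \iso \F_2$ concentrated in degree $0$, while $H^*ku$ is recorded by $\cA(1)/(Sq^1,Sq^3)$, the two-dimensional module with generators in degrees $0$ and $2$ linked by $Sq^2$; the target $\Susp^2 ko$ contributes $\Susp^2\F_2$. By the induction remark of the introduction it suffices to produce the maps in $\cA(1)\hy\mathrm{Mod}$, and since $\cA$ is free over $\cA(1)$ the functor $\cA\tensor_{\cA(1)}-$ is exact, so an exact sequence of the recorded modules is precisely what is required.

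The cofiber sequence yields a long exact sequence $\cdots \lla H^n ko \llla{c^*} H^n ku \llla{R^*} H^n\Susp^2 ko \lla \cdots$, and I would first show it collapses to a short exact sequence by proving that $c^*$ is surjective. Since $c$ is a ring map it preserves the unit, so $c^*$ carries the fundamental class of $ku$ to that of $ko$; as $H^*ko$ is generated over $\cA$ by its fundamental class and $c^*$ is $\cA$-linear, the image of $c^*$ is all of $H^*ko$. Exactness then forces the connecting homomorphism to vanish, and hence $R^*$ to be injective, giving the short exact sequence $0 \lra \Susp^2\F_2 \llra{R^*} \cA(1)/(Sq^1,Sq^3) \llra{c^*} \F_2 \lra 0$.

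It remains to pin down the two maps as drawn. The surjection $c^*$ must be the projection of $\cA(1)/(Sq^1,Sq^3)$ onto its degree-$0$ quotient $\cA(1)/(Sq^1,Sq^2)$, killing the class in degree $2$, since this is the only $\cA(1)$-linear surjection onto $\F_2$. By exactness $R^*$ is the inclusion of the kernel, which for degree reasons is exactly the degree-$2$ line, so the generator of $\Susp^2\F_2$ maps isomorphically onto the $Sq^2$-image of the bottom class of $H^*ku$. Both maps, together with the $Sq^2$ joining the two classes of $H^*ku$, then reproduce the arrows in the displayed picture; a final consistency check is the dimension count $0 \to 1 \to 2 \to 1 \to 0$ in the relevant degrees.

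The one point requiring care, and the main obstacle, is establishing that the extension is \emph{non-split}, i.e. that $Sq^2$ genuinely connects the two classes of $H^*ku$ rather than $H^*ku$ being $\F_2\dirsum\Susp^2\F_2$; this is what makes the middle module $\cA(1)/(Sq^1,Sq^3)$ instead of a trivial sum, and it is what pins $R^*$ down to the non-split position. I would settle this from the standard description of $H^*ku$ via the Milnor primitives $Q_0 = Sq^1$ and $Q_1 = Sq^3 + Sq^2Sq^1$: one has $H^*ku \iso \cA/(Sq^1,Q_1)$, and modulo $Sq^1$ the relation $Q_1 \congruent Sq^3$ shows that this restricts to $\cA(1)/(Sq^1,Sq^3)$, in which $Sq^2$ acts nontrivially from degree $0$ to degree $2$. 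With the non-split extension in hand, the short exact sequence and the identification of $c^*$ and $R^*$ follow as above.
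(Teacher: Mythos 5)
Your argument is correct and is essentially the route the paper takes implicitly: the paper simply records this proposition without proof, and proves the analogous statement for $\eta_1 c_1 r$ by noting that the displayed maps are the only ones making the long exact cohomology sequence of the cofibration exact. Your additional details (surjectivity of $c^*$ from the unit, the identification $H^*ku \iso \cA/(Q_0,Q_1)$ restricting to $\cA(1)/(Sq^1,Sq^3)$, and the degree count) correctly fill in what the paper treats as standard.
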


We will now prove Theorem~\ref{main} in a series of steps.
%% \begin{proof}
We start with the braid of cofibrations induced by the composite 
$ko \llra{c} ku \lra H\Z$.

\[
\xymatrix
{
\Sigma ko
\ar@/^1pc/_{\eta}[rr]
\ar^{\eta_1}[rd]
&&
ko
\ar@/^1pc/[rr]
\ar_{c}[rd]
&&
H\Z
\\
&
ko\co{1}
\ar[ru]
\ar^{c_1}[rd]
&&
ku
\ar[ru]
\ar_{R}[rd]
&
\\
&&
\Sigma^2 ku
\ar@/_1pc/^{r}[rr]
\ar^{v}[ru]
&&
\Sigma^2 ko
&
\\
}
\]
This gives the $\eta_1 c_1 r$ sequence.  
To continue to the next step,
we will need to know the maps induced in cohomology by this one.

\begin{prop}
$ \Sigma ko \llra{\eta_1} ko\co{1} \llra{c_1} \Sigma^2 ku$ 
induces the short exact sequence
\[
\xymatrix{
\Sigma \F_2
&&
\Sigma \cA(1)/(Sq^2)
\ar[ll]
&&
\Sigma^2 \cA(1)/(Sq^1,Sq^3)
\ar[ll]
\\
\circ
&&
\circ
\ar[d]
\ar[ll]
&&
\\
&&
\circ
\ar@/^1pc/[dd]
&&
\circ
\ar[ll]
\ar@/^1pc/[dd]
\\
\\
&&
\circ
&&
\circ
\\
}
\]
\end{prop}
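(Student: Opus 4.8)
The plan is to read the Proposition off the long exact cohomology sequence of the cofiber sequence $\Sigma ko \llra{\eta_1} ko\co{1} \llra{c_1} \Sigma^2 ku \llra{r} \Sigma^2 ko$ constructed just above (the octahedral sequence of the composite $ko \llra{c} ku \lra H\Z$). Because $\cA \tensor_{\cA(1)} -$ is exact and faithful, it suffices to work with the recorded $\cA(1)$-modules, and I would simply quote the computations already in hand: $H^*(\Sigma ko) = \Sigma\F_2$, $H^*(ko\co{1}) = \Sigma\cA(1)/(Sq^2)$ (the first case of the Postnikov Proposition for $ko$), and $H^*(\Sigma^2 ku) = \Sigma^2\cA(1)/(Sq^1,Sq^3)$ (from $H^*ku = \cA(1)/(Sq^1,Sq^3)$). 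These have total dimensions $1$, $3$, and $2$. The cofiber sequence yields
\[
\cdots \lra H^n(\Sigma^2 ku) \llra{c_1^*} H^n(ko\co{1}) \llra{\eta_1^*} H^n(\Sigma ko) \llra{\partial} H^{n+1}(\Sigma^2 ku) \lra \cdots,
\]
so everything reduces to showing that the single connecting map $\partial$ vanishes, after which the sequence breaks into the asserted short exact sequence.

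The one substantive point is thus the vanishing of $\partial$, and I would obtain it from surjectivity of $\eta_1^*$. Its target $\Sigma\F_2$ is one-dimensional, so $\eta_1^*$ is either onto or zero; were it zero, exactness in the middle would make $c_1^*$ onto, which is impossible since its source has dimension $2$ and its target dimension $3$. Hence $\eta_1^*$ is onto, exactness at $H^*(\Sigma ko)$ forces $\partial = 0$, and then exactness at $H^*(\Sigma^2 ku)$ makes $c_1^*$ injective. One can also see the surjectivity geometrically: $\eta_1$ is $\eta$-multiplication into the $1$-connected cover and is onto on $\pi_1$, hence an isomorphism on the bottom $\F_2$ in cohomology.

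It then remains to match the maps with the diagram. The surjection $\eta_1^* \colon \Sigma\cA(1)/(Sq^2) \lra \Sigma\F_2$ is the unique nonzero map to the one-dimensional module, carrying the degree $1$ generator to the generator, so $\Ker \eta_1^*$ is the submodule in degrees $\geq 2$, generated by $Sq^1$ applied to the generator. A short check in $\cA(1)/(Sq^2)$ shows this degree $2$ class is killed by $Sq^1$ and by $Sq^3$ (using $Sq^3 Sq^1 = Sq^2 Sq^2$ and that $Sq^2$ kills the generator) and is sent by $Sq^2$ to the degree $4$ class; thus $\Ker \eta_1^* \iso \Sigma^2\cA(1)/(Sq^1,Sq^3)$ and $c_1^*$ is its inclusion, exactly as drawn. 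I expect this last module identification, rather than the vanishing of $\partial$, to be the step most prone to degree slips; I would also note that naturality against the vertical maps $ko\co{1} \to ko$ and $v \colon \Sigma^2 ku \to ku$ gives nothing here, since both induce the zero map in $\cA(1)$-cohomology for degree reasons.
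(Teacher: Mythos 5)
Your argument is correct and is essentially the paper's own proof, which simply asserts that "these are the only maps which can make the long exact sequence exact"; you have filled in the details (the dimension count forcing $\eta_1^*$ onto, hence $\partial=0$, and the identification of $\Ker\eta_1^*$ with $\Sigma^2\cA(1)/(Sq^1,Sq^3)$ via the Adem relation $Sq^3Sq^1=Sq^2Sq^2$). No gaps.
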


\begin{proof}
These are the only maps which can make the long exact sequence exact.
\end{proof}

From this we observe that we have a commutative square
\[
\xymatrix{
\Sigma H\F_2
\ar^{Sq^1}[r]
&
\Sigma^2 H\Z
\\
ko\co{1}
\ar^{c_1}[r]
\ar[u]
&
\Sigma^2 ku
\ar[u]
\\
}
\]
which induces the following map of cofiber sequences.  
The map induced in cohomology by ${\eta_1}$ implies that
the left hand map $\Sigma ko \lra \Sigma H\Z$ is nontrivial.
This implies that
the fiber of $c_2$ is $\Sigma ko\co{1}$,
giving the next Postnikov lift
of the $\eta c R$ sequence.

\[
\xymatrix{
\Sigma H\Z
\ar[r]
&
\Sigma H\F_2
\ar^{Sq^1}[r]
&
\Sigma^2 H\Z
\ar[r]
&
\Sigma^2 H\Z
\\
\Sigma ko
\ar[u]
\ar^{\eta_1}[r]
&
ko\co{1}
\ar[u]
\ar^{c_1}[r]
&
\Sigma^2 ku
\ar[u]
\ar^{r}[r]
&
\Sigma^2 ko
\ar[u]
\\
\Sigma ko\co{1}
\ar[u]
\ar^{\eta_2}[r]
&
ko\co{2}
\ar[u]
\ar^{c_2}[r]
&
\Sigma^4 ku
\ar[u]
\ar^{r_1}[r]
&
\Sigma^2 ko\co{1}
\ar[u]
\\
}
\]

Again
we need to record the maps induced in cohomology for use in the next step.
\pagebreak

\begin{prop}
$ \Sigma ko\co{1} \llra{\eta_2} ko\co{2} 
\llra{c_2} \Sigma^4 ku$ 
induces the short exact sequence
\[
\xymatrix{
\Sigma^2 \cA(1)/(Sq^2)
&
\Sigma^2 \cA(1)/(Sq^3)
\ar^{\eta_2^*}[l]
&
\Sigma^4 \cA(1)/(Sq^1,Sq^3)
\ar^{c_2^*}[l]
\\
}
\]
\begin{center}
\scalebox{0.7}{
$
\xymatrix{
% 2
\circ
\ar[d]
&&
\circ
\ar[d]
\ar@/_1pc/[dd]
\ar[ll]
&&
\\
% 3
\circ
\ar@/^1pc/[dd]
&&
\circ
\ar@/^1pc/[dd]
&&
\\
% 4
&&
\circ
\ar@/_1pc/[dd]
&&
\circ
\ar@/_1pc/[dd]
\ar[ll]
\\
% 5
\circ
&&
\circ
\ar[d]
&&
\\
% 6
&&
\circ
&&
\circ
\\
{\phantom{\Sigma^2 \cA(1)/(Sq^2)}}
&&
{\phantom{\Sigma^2 \cA(1)/(Sq^3)}}
&&
{\phantom{\Sigma^4 \cA(1)/(Sq^1,Sq^3)}}
\\
}
$
}% end scalebox
\end{center}

\end{prop}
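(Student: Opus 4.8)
The plan is to argue exactly as in the preceding proposition: pin down the three cohomology modules, which are already determined, and then observe that exactness leaves only one possibility for the maps. First I would record the terms. By complex periodicity $H^*(\Sigma^4 ku) = \Sigma^4\cA(1)/(Sq^1,Sq^3)$, the fourth suspension of the two-cell module $H^*(ku) = \cA(1)/(Sq^1,Sq^3)$, with classes in degrees $4$ and $6$ joined by $Sq^2$. From the Postnikov-tower proposition for $ko$ computed above, part (2) gives $H^*(ko\co{2}) = \Sigma^2\cA(1)/(Sq^3)$ and part (1) gives $H^*(ko\co{1}) = \Sigma\cA(1)/(Sq^2)$, so that $H^*(\Sigma ko\co{1}) = \Sigma^2\cA(1)/(Sq^2)$. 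Each of the three is cyclic on its bottom class, sitting in degrees $4$, $2$, and $2$ respectively, and I would note the action of $Sq^1,Sq^2$ as in the displayed diagram.

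Next I would feed the cofiber sequence $\Sigma ko\co{1} \llra{\eta_2} ko\co{2} \llra{c_2} \Sigma^4 ku$ into cohomology, producing a long exact sequence of $\cA(1)$-modules whose connecting homomorphisms I claim vanish. This follows from a Poincar\'e-series count: $t^2+t^3+t^4+t^5+t^6 = (t^4+t^6)+(t^2+t^3+t^5)$, so the series of the middle term $H^*(ko\co{2})$ is the sum of those of $H^*(\Sigma^4 ku)$ and $H^*(\Sigma ko\co{1})$. A degreewise rank count in the long exact sequence then leaves no room for a nonzero connecting map, so it collapses to the short exact sequence $0 \to \Sigma^4\cA(1)/(Sq^1,Sq^3) \llra{c_2^*} \Sigma^2\cA(1)/(Sq^3) \llra{\eta_2^*} \Sigma^2\cA(1)/(Sq^2) \to 0$ of the statement.

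It remains to identify the two maps, and here there is essentially no freedom. Since $\eta_2^*$ is onto and both $\Sigma^2\cA(1)/(Sq^3)$ and $\Sigma^2\cA(1)/(Sq^2)$ are cyclic on a degree-$2$ class, $\eta_2^*$ must send generator to generator, so it is the canonical quotient; this is well defined precisely because $Sq^3 = Sq^1 Sq^2 \in \cA(1)Sq^2$. Then $c_2^*$ is the inclusion of $\ker\eta_2^*$, the image of the ideal $(Sq^2)$ in $\Sigma^2\cA(1)/(Sq^3)$, and the one point that needs care is to see that this kernel is the suspension of $H^*(\Sigma^4 ku)$. The crux is that its generator, the class of $Sq^2$ in degree $4$, is annihilated by $Sq^1$ (since $Sq^1 Sq^2 = Sq^3$) and by $Sq^3$ (since $Sq^3 Sq^2 = 0$ by the Adem relation), so it is cyclic with exactly the relations $Sq^1$ and $Sq^3$; that is, $\ker\eta_2^* \iso \Sigma^4\cA(1)/(Sq^1,Sq^3)$, matching $H^*(\Sigma^4 ku)$. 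These are then the only maps which can make the long exact sequence exact, which is the claim.
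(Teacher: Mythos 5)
Your proof is correct and takes essentially the same route as the paper, which for these propositions simply observes that the modules are already known (from periodicity and the Postnikov-tower computations) and that "these are the only maps which can make the long exact sequence exact"; your Poincar\'e-series count and the identification $\ker\eta_2^* \iso \Sigma^4\cA(1)/(Sq^1,Sq^3)$ just make that one-line argument explicit. No gaps.
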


Now consider the braid of cofibrations induced by the composite 
$ko\co{4} \lra ko\co{2} \lra \Sigma^4 ku $.

\[
\xymatrix
{
&&
ko\co{4}
\ar@/^1pc/^{c_4}[rr]
\ar[rd]
&&
\Sigma^4 ku
\\
&
\Sigma ko\co{2}
\ar^{\eta_4}[ru]
\ar[rd]
&&
ko\co{2}
\ar^{c_2}[ru]
\ar[rd]
&
\\
&&
\Sigma ko\co{1}
\ar@/_1pc/[rr]
\ar^{\eta_2}[ru]
&&
\Sigma^2 H\F_2
&
\\
\\
}
\]

Since $\eta_2^*$  is nonzero in degree 2, the map
$\Sigma ko\co{1} \lra \Sigma^2 H\F_2$  is nontrivial, and hence the
the fiber of $c_4$ is $\Sigma ko\co{2}$.
Again, we need to record the maps induced in cohomology, and again, they 
`roll' one step to the left.

\begin{prop}
$ \Sigma^3 ku \llra{r_2} \Sigma ko\co{2} \llra{\eta_4} ko\co{4} $
induces the short exact sequence

\[
\xymatrix{
\Sigma^3 \cA(1)/(Sq^1,Sq^3)
&
\Sigma^3 \cA(1)/(Sq^3)
\ar^{r_2^*}[l]
&
\Sigma^4 \cA(1)/(Sq^1,Sq^2Sq^3)
\ar^{\eta_4^*}[l]
\\
}
\]
\begin{center}
\scalebox{0.7}{
$
\xymatrix{
% 3
\circ
\ar@/^1pc/[dd]
&&
\circ
\ar[d]
\ar@/^1pc/[dd]
\ar[ll]
&&
\\
% 4
&&
\circ
\ar@/_1pc/[dd]
&&
\circ
\ar@/_1pc/[dd]
\ar[ll]
\\
% 5
\circ
&&
\circ
\ar@/^1pc/[dd]
&&
\\
% 6
&&
\circ
\ar[d]
&&
\circ
\ar[d]
\\
% 7
&&
\circ
&&
\circ
\\
{\phantom{\Sigma^3 \cA(1)/(Sq^1,Sq^3)}}
&&
{\phantom{\Sigma^3 \cA(1)/(Sq^3)}}
&&
{\phantom{\Sigma^4 \cA(1)/(Sq^1,Sq^2Sq^3)}}
\\
}
$
}% end scalebox
\end{center}

\end{prop}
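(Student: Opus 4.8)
The plan is to imitate the argument already used for the $\eta_1 c_1 r$ sequence and read the maps off the long exact cohomology sequence of the cofibration $\Sigma^3 ku \llra{r_2} \Sigma ko\co{2} \llra{\eta_4} ko\co{4}$ (a desuspended rotation of the fourth row of Theorem~\ref{main}, whose existence was set up in the braid above, where the fiber of $c_4$ was identified as $\Sigma ko\co{2}$). By the previous Propositions all three modules are known: $H^*(\Sigma^3 ku)=\Sigma^3\cA(1)/(Sq^1,Sq^3)$, $H^*(\Sigma ko\co{2})=\Sigma^3\cA(1)/(Sq^3)$, and $H^*(ko\co{4})=\Sigma^4\cA(1)/(Sq^1,Sq^2Sq^3)$, each cyclic on a class in degree $3$, $3$, and $4$ respectively. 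The first thing I would check is a Poincar\'e-series count: the surviving classes sit in degrees $3,4,5,6,7$ on the middle module, $3,5$ on the right, and $4,6,7$ on the left, so in every degree the middle dimension equals the sum of the two outer dimensions. Summed over the long exact sequence this forces every connecting homomorphism to vanish, so the sequence is automatically short exact and it only remains to name the two maps.

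For $r_2^*$ I would note that an $\cA(1)$-linear map $\Sigma^3\cA(1)/(Sq^3)\to\Sigma^3\cA(1)/(Sq^1,Sq^3)$ between these cyclic modules is determined by the image of the degree-$3$ generator, hence is either zero or the canonical quotient; since short exactness makes it onto, $r_2^*$ is the canonical surjection $\cA(1)/(Sq^3)\lraonto\cA(1)/(Sq^1,Sq^3)$. Its kernel is the cyclic submodule of $\Sigma^3\cA(1)/(Sq^3)$ generated by $Sq^1$ times the bottom class, living in degrees $4,6,7$. The key computation is to verify that this generator is annihilated by both $Sq^1$ (since $Sq^1Sq^1=0$) and $Sq^2Sq^3$ (for degree reasons, $Sq^2Sq^3$ pushing it above the top class), which yields a surjection of the $3$-dimensional $\Sigma^4\cA(1)/(Sq^1,Sq^2Sq^3)$ onto the $3$-dimensional $\Ker(r_2^*)$, hence an isomorphism $\Ker(r_2^*)\iso\Sigma^4\cA(1)/(Sq^1,Sq^2Sq^3)$; along the way one identifies $Sq^2\cdot(Sq^1 g)$ and $Sq^3\cdot(Sq^1 g)$ as the two higher classes. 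This is the same sort of shifted-ideal identification recorded earlier as $\Sigma(Sq^2)\iso\Sigma^3\cA(1)/(Sq^3)$ and $\Sigma^2(Sq^3)\iso\Sigma^5\cA(1)/(Sq^1,Sq^2Sq^3)$.

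With the kernel identified the proof closes itself: $\eta_4^*$ must be the unique $\cA(1)$-isomorphism of $\Sigma^4\cA(1)/(Sq^1,Sq^2Sq^3)$ onto $\Ker(r_2^*)$, so it is injective with image exactly $\Ker(r_2^*)$, and the displayed diagram of dots merely records this splitting together with the surviving $Sq^1$ (straight) and $Sq^2$ (curved) actions. I expect the only genuine obstacle to be that middle module-theoretic step: one must be certain that no hidden $\cA(1)$-extension in $\Ker(r_2^*)$ spoils the isomorphism with $\Sigma^4\cA(1)/(Sq^1,Sq^2Sq^3)$, since the uniqueness of both $r_2^*$ and $\eta_4^*$, and hence the whole proposition, rests on that identification. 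Everything else is forced once the dimension count has been made.
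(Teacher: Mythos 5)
Your argument is correct and is essentially the paper's own (largely implicit) proof: the paper justifies the analogous $\eta_1 c_1 r$ proposition with the one-line remark that ``these are the only maps which can make the long exact sequence exact,'' and simply asserts that the later cases ``roll one step to the left.'' Your degreewise dimension count ($\{3,5\}\sqcup\{4,6,7\}=\{3,4,5,6,7\}$) forcing the connecting maps to vanish, followed by the identification of $\ker(r_2^*)$ as the cyclic module on $Sq^1$ times the generator, is exactly that argument carried out in full, with no gaps.
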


Since $\eta_4^*$ sends the generator to $Sq^1$, we get a map of cofiber sequences
whose fiber gives the next lift, $\eta_8 c_8 r_4$.

\[
\xymatrix{
\Susp^3 H\Z
\ar[r]
&
\Sigma^3 H\F_2
\ar_{Sq^1}[r]
&
\Susp^4 H\Z
\ar[r]
&
\Susp^4 H\Z
&
\\
\Sigma^3 ku
\ar[u]
\ar^{r_2}[r]
&
\Sigma ko\co{2}
\ar[u]
\ar^{\eta_4}[r]
&
ko\co{4}
\ar[u]
\ar^{c_4}[r]
&
\Sigma^4 ku
\ar[u]
\\
\Sigma^5 ku
\ar[u]
\ar^{r_4}[r]
&
\Sigma ko\co{4}
\ar[u]
\ar^{\eta_8}[r]
&
ko\co{8}
\ar[u]
\ar^{c_8}[r]
&
\Sigma^6 ku
\ar[u]
\\
\\
}
\]

\begin{prop}
$ \Sigma^5 ku \llra{r_4} \Sigma ko\co{4} \llra{\eta_8} ko\co{8} $
induces the short exact sequence

\[
\xymatrix{
\Sigma^5 \cA(1)/(Sq^1,Sq^3)
&
\Sigma^5 \cA(1)/(Sq^1, Sq^2Sq^3)
\ar^{r_4^*}[l]
&
\Sigma^8 \F_2
\ar^(.3){\eta_8^*}[l]
\\
}
\]
\begin{center}
\scalebox{0.7}{
$
\xymatrix{
% 5
\circ
\ar@/^1pc/[dd]
&&
\circ
\ar@/^1pc/[dd]
\ar[ll]
&
\\
% 6
&&
&
\\
% 7
\circ
&&
\circ
\ar[d]
&
\\
% 8
&&
\circ
&&&
\circ
\ar[lll]
\\
{\phantom{\Sigma^5 \cA(1)/(Sq^1,Sq^3)}}
&
{\phantom{\Sigma^5 \cA(1)/(Sq^1, Sq^2Sq^3)}}
&
{\phantom{\Sigma^8 \F_2}}
\\
}
$
}% end scalebox
\end{center}
\end{prop}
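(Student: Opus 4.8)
The plan is to read off the three cohomology modules from work already in hand and then observe, exactly as in the earlier steps, that exactness leaves only one possibility for the two maps. The statement concerns the bottom row of the preceding ladder, the cofiber sequence $\Sigma^5 ku \llra{r_4} \Sigma ko\co{4} \llra{\eta_8} ko\co{8} \llra{c_8} \Sigma^6 ku$, which induces a long exact sequence of $\cA(1)$-modules. First I would identify the three relevant terms. Since $H^* ku = \cA(1)/(Sq^1,Sq^3)$, we have $H^*(\Sigma^5 ku) = \Sigma^5 \cA(1)/(Sq^1,Sq^3)$, a two-dimensional module with classes in degrees $5$ and $7$ joined by $Sq^2$. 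From part~(4) of the Postnikov computation for $ko$ we have $H^*(ko\co{4}) = \Sigma^4 \cA(1)/(Sq^1,Sq^2Sq^3)$, hence $H^*(\Sigma ko\co{4}) = \Sigma^5 \cA(1)/(Sq^1,Sq^2Sq^3)$, with classes in degrees $5, 7, 8$. Finally, the real periodicity equivalence $\Sigma^8 ko \hequiv ko\co{8}$ together with $H^* ko = \F_2$ in $\cA(1)\hy{\mathrm{Mod}}$ (because $\cA(1)/(Sq^1,Sq^2) = \F_2$) gives $H^*(ko\co{8}) = \Sigma^8 \F_2$, a single class in degree $8$.

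With these identifications the long exact sequence must collapse to the claimed short exact sequence, and the only freedom is in the maps $r_4^*$ and $\eta_8^*$. Comparing dimensions, the middle module (of dimension $3$) is the sum of the outer two (of dimensions $2$ and $1$), so it is enough to show that $\eta_8^*$ is injective onto the top cell and that $r_4^*$ is the resulting quotient. The decisive point is the location of the degree-$8$ class of $H^*(\Sigma ko\co{4})$: the commutative square displayed above the statement, in which the $k$-invariant factors through $\Sigma^3 H\F_2 \llra{Sq^1} \Susp^4 H\Z$, forces this class to be $Sq^1$ applied to the degree-$7$ class. Being in the top degree it is annihilated by both $Sq^1$ and $Sq^2$, so it spans a trivial submodule isomorphic to $\Sigma^8 \F_2$.

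Then $\eta_8^*$, an $\cA(1)$-map out of the trivial module $\Sigma^8 \F_2$, necessarily carries the generator into this submodule, and its nontriviality follows from the geometry of the map of cofiber sequences, just as in the previous steps (the left-hand vertical map $\Sigma ko\co{4} \lra \Sigma^3 H\F_2$ is detected by $Sq^1$). Injectivity of $\eta_8^*$ then forces $r_4^*$ to be the quotient map $\Sigma^5 \cA(1)/(Sq^1,Sq^2Sq^3) \lra \Sigma^5 \cA(1)/(Sq^1,Sq^3)$ that kills the top cell, equivalently that imposes the extra relation $Sq^3$, and exactness in the middle is then immediate from the dimension count. The one step that genuinely needs care, rather than being forced, is the verification that $\eta_8^* \neq 0$; once the degree-$8$ generator is known to come from $ko\co{8}$, the long exact sequence splits exactly as stated.
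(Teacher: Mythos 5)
Your proposal is correct and follows the paper's own (largely implicit) argument for these propositions: identify the three $\cA(1)$-modules from the preceding results and observe that exactness of the long exact sequence leaves only one possibility for $r_4^*$ and $\eta_8^*$. One small correction: the map you invoke to see $\eta_8^*\neq 0$ is not there --- the vertical map in the ladder is $\Sigma ko\co{2} \lra \Sigma^3 H\F_2$, and its composite with $\Sigma ko\co{4} \lra \Sigma ko\co{2}$ is null by construction --- but the nontriviality is in fact forced by exactness alone, since $\Sigma^5\cA(1)/(Sq^1,Sq^3)$ vanishes in degree $8$, so $r_4^*$ must kill the degree-$8$ class of $\Sigma^5\cA(1)/(Sq^1,Sq^2Sq^3)$ and exactness then places that class in the image of $\eta_8^*$.
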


Finally, consider the braid of cofibrations induced by the composite
$\Sigma ko\co{8} \lra \Sigma ko\co{4} \llra{\eta_8} ko\co{8}$.

\[
\xymatrix{
\Sigma ko\co{8}
\ar@/^1pc/^{\widetilde{\eta}}[rr]
\ar[dr]
&&
ko\co{8}
\ar@/^1pc/^{c_8}[rr]
\ar^{\widetilde{c}}[dr]
&&
\Sigma^6 ku
\\
&
\Sigma ko\co{4}
\ar^{\eta_8}[ru]
\ar[rd]
&&
\Sigma^8 ku
\ar^{\widetilde{R}}[rd]
\ar^{v}[ru]
&&
\\
\Sigma^5 ku
\ar@/_1pc/[rr]
\ar^{r_4}[ru]
&&
\Sigma^5 H\Z
\ar[ru]
\ar@/_1pc/[rr]
&&
\Sigma^2 ko\co{8}
&
\\
\\
}
\]

Since $r_4^*$ is an isomorphism on $H^5$, the map $\Sigma^5 ku \lra \Sigma^5 H\Z$
must be the bottom cohomology generator, justifying the appearance of 
$\Sigma^8 ku$ and $v$ in this braid.

The result is
a cofiber sequence $\Sigma^9 ko \lra \Sigma^8 ko \lra \Sigma^8 ku$. 
The maps are $ko$-module maps by construction, and agree 
with the 8-fold suspensions of $\eta$, $c$ and $R$ in homotopy, by
the maps $X\co{8} \lra X$.  
The adjunction $F_{ko}(\Sigma^9 ko, \Sigma^8 ko) \simeq F(S^9, \Sigma^8 ko)$,
shows that a $ko$-module map $\Sigma^9 ko \lra \Sigma^8 ko$
is determined by its effect on homotopy.  
Therefore, the first map, and hence the other two,
are the 8-fold suspensions
of $\eta$, $c$ and $R$.
\qqed

\end{document}